\newtheorem{remark}{Remark}
\newtheorem{lemma}{Lemma}
\newtheorem{theorem}{Theorem}
\journal{Applied Numerical Mathematics}
\begin{document}
\begin{frontmatter}
\title{An alternating direction implicit spectral method for solving two dimensional multi-term time fractional mixed diffusion and diffusion-wave equations}
\author[csust]{Zeting Liu}
\author[qut]{Fawang Liu\corref{cor1}}
\ead{f.liu@qut.edu.au.}
\cortext[cor1]{Corresponding author}
\author[qut]{Fanhai Zeng}

%
\address[csust]{School of Mathematics and Statistics, Beijing Institute of Technology,
Beijing, 100081, P. R. China.}
\address[qut]{School of Mathematical Sciences, Queensland University of Technology, GPO Box 2434, Brisbane, Qld. 4001, Australia}


\begin{abstract}
In this paper, we consider the initial boundary value problem of the two dimensional multi-term time fractional mixed diffusion and diffusion-wave equations. An alternating direction implicit (ADI) spectral method is developed based on Legendre spectral approximation in space and finite difference discretization in time.
Numerical stability and convergence of the schemes are proved, the optimal error is $O(N^{-r}+\tau^2)$, where $N, \tau, r$ are the polynomial degree, time step size and the regularity of the exact solution, respectively. We also consider the non-smooth solution case by adding some correction terms. Numerical experiments are presented to confirm our theoretical analysis.  These techniques can be used to model diffusion and transport of viscoelastic non-Newtonian fluids.
\end{abstract}
\begin{keyword}
multi-term time fractional diffusion-wave equation,
Legendre spectral method, stability and convergence,
alternating direction implicit method.
\MSC 26A33, 65M70, 65M12
\end{keyword}
\end{frontmatter}

\section{Introduction}
\setcounter{equation}{0}
In the last few decades, fractional order differential equations have been successfully employed for modeling of many different processes and systems, such as physics, chemistry, engineering, astrophysics, classical mechanics,
quantum mechanics, nuclear physics, hadron spectroscopy, reader can refer to
 \cite{VV,RH,LZL15}.

Compared to single-term time fractional partial differential equations(PDEs),
for example, time frational sub-diffusion or diffusion wave equation which the
fractional order is $0<\alpha<1$ and $1<\alpha<2$, respectively.
The multi-term time fractional PDEs are proposed to improve the modelling
accuracy in depicting the anomalous diffusion process, successfully capturing
power-law frequency dependence \cite{JRM}, adequately modeling various types of
viscoelastic damping \cite{Chen12}, properly simulating the unsteady flow of a fractional Maxwell fluid \cite{CC1,CC2,Liu18}.
Qin et al. \cite{Qin17} developed multi-term time fractional Bloch equations and application in magnetic resonance imaging.
Qin et al. \cite{Qin18} considered two-dimensional multi-term time and space fractional Bloch-Torrey model based on bilinear rectangular finite elements. Fan et al. \cite{Fan18a} derived a unstructured mesh finite element method for the two-dimensional multi-term time-space fractional diffusion-wave equation on an irregular convex domain.
Fan et al. \cite{Fan18b} proposed some novel numerical techniques for an inverse problem of the multi-term time fractional partial differential equation.

In this paper, we consider the following two dimensional multi-term
time fractional  mixed diffusion and diffusion-wave equation:
\begin{eqnarray}\label{y1}
_CD_{0,t}^\alpha u(x,y,t)+\sum_{j=1}^{Q}{a_j} {_C}D_{0,t}^{\alpha_j}u(x,y,t)
=\mu\Delta u(x,y,t)+f(x,y,t),
\end{eqnarray}
subject to initial conditions
\begin{eqnarray}\label{y2}
u(x,y,0)=g_1(x,y),\ \ \partial_tu(x,y,0)=g_2(x,y),
\end{eqnarray}
and boundary conditions
\begin{eqnarray}\label{y3}
u(x,y,t)|_{\partial_\Omega}=0,
\end{eqnarray}
where
$(x,y)\in \Omega,\ \ t\in (0,T),$\ $\Omega=I_x\times I_y=(-1,1)\times (-1,1),\
2>\alpha>\alpha_1>...>\alpha_{Q'}>1=\alpha_{Q'+1}>...>\alpha_Q>0,$
$Q$ is an integer and the fractional derivative $\alpha$ and $\alpha_j$
are defined in Caputo sense, $a_j$ and $\mu$ are positive numbers.

The multi-term time-fractional PDEs have
generated considerable interest both in mathematics and in applications.
Existence, uniqueness and a priori estimates for a class of these equations were obtained
by Luchko \cite{Luchko}  based on an appropriate maximum principle and the Fourier method.
The analytical solutions of the multi-term fractional PDEs have been studied by
many  authors. Schneider \cite{Schneider} considered
the fractional diffusion and wave equations in full spaces and half spaces
and obtained the corresponding Green¡¯s functions in terms of Fox functions.
Daftardar-Gejji et al. \cite{Daftardar-Gejji} obtained the linear and non-linear
diffusion-wave equations of fractional order by Adomian decomposition method.
Jiang et al. \cite{Jiang12} derived the analytical solutions for
the multi-term time-space fractional advection-diffusion equations.
Subsequently, Ding et al. \cite{ding} presented the analytical solutions for the multi-term time-space fractional advection-diffusion equations with mixed boundary conditions. Ming et al. \cite{Ming16} proposed Analytical solutions of multi-term time fractional differential equations and application to unsteady flows of generalized viscoelastic fluid.

There are various numerical methods in the numerical analysis and scientific computing for
 multi-term time fractional equation, for multi-term time fractional diffusion equation,
Liu et al. \cite{Liu13} used the fractional Adams-Bashforth method and  fractional Adams-Moulton method as
predictor and corrector formulas.
Jin et al. \cite{jbt} studied the equation in a bounded convex polyhedral domain with smooth and non-smooth data
by finite element method.
Zheng et al.  \cite{Zheng16} proposed a high order scheme using spectral method both in time and space.
For multi-term time fractional diffusion-wave equation,
Ren et al. \cite{rjc} constructed some efficient numerical schemes to solve one-dimensional and
two-dimensional cases by combining the compact difference approach for the spatial discretisation and an
$L1$ approximation for the multi-term time Caputo fractional derivatives.
Dehghan et al. \cite{md} established two schemes which the time order is $3-\alpha$
 and the space derivative was discretized with a fourth-order compact
finite difference procedure and Galerkin spectral method, respectively.
Salehi  \cite{rs} developed a meshless collocation method to solve the equation in two dimensions,
the moving least squares reproducing kernel particle approximation was employed to construct the
shape functions for spatial approximation and the time accuracy was $O(\tau^{3-\alpha}).$
Feng \cite{Feng} derive two new different finite difference schemes to approximate
the unsteady MHD Couette flow of a generalized Oldroyd-B fluid, one the time accuracy  is $O(\tau)$,
the other is $O(\tau^{\min \{3-\gamma_s, 2-\alpha_q, 2-\beta\}})$ and both second order in space, where
$\gamma_s$ and $\alpha_q$ are the biggest time derivative less than $2$ and $1$ respectively, $\beta$
is the time derivative on the space derivative.
Hao \cite{hzp} designed a compact difference scheme with time fractional order $(0<\alpha<1<\beta<2),$
and proved the first order accuracy in time and fourth order in space, this scheme can be applied to multi-term
time fractional and two dimensional cases. 
\textcolor[rgb]{1.00,0.00,0.00}{The interesting readers can also refer to a review paper \cite{lcp}, in which the numerical methods for multi-term time-fractional differential equations were introduced.}

However, the time order of the papers we mentioned above are all depending the fractional derivatives or less than two.
There are some papers in which second order accuracy in time were obtained, see \cite{zeng,wk,jbt2}.
Recently, Wang \cite{WZB} proposed a class of second order approximations by using Lubich's idea \cite{Lub86},
called weighted and shifted Gr\"unwald difference operators for the Riemann-Liouville fractional derivatives.
In this paper, we use Wang's idea to construct an unified numerical scheme which occupies second order in time and
spectral accuracy in space for the mixed diffusion and diffusion-wave equation in two dimensions.
Chen \cite{chenhu} consider this equation in one dimensional case with variable coefficients, compared to Chen's paper,
our main contributions are highlighted as follows:

$\centerdot$ We establish a proper ADI spectral scheme in order to conveniently
 solve the two dimensional multi-term time fractional equation on computer.

$\centerdot$ We rigorously prove the stability and convergence theorems of the ADI spectral scheme, and we get an optimal
 error estimate $O(\tau^2+N^{-r})$, while $O(\tau^2+N^{1-r})$ in \cite{chenhu}.

 $\centerdot$ We consider the non-smooth solution case for this equation which has practical significance,
 more precisely, we add some correction terms to the
 ADI spectral scheme to achieve high accuracy.

The rest of this paper is organized as follows.
In Section 2, some preliminaries and notations are shown.
In Section 3, we establish a unified ADI numerical
scheme for the two dimensional multi-term time fractional mixed diffusion and diffusion-wave equations.
In Section 4, the stability and convergence of the fully discrete scheme are analysed.
In Section 5, correction terms are added to solve non-smooth solution case.
We do some numerical experiments in Section 6.
Finally, a conclusion is made in Section 7.

\section{Preliminaries and notations}
Let $L^2(\Omega)$, $L^{\infty}(\Omega)$, and $H^m(\Omega)$ be the usual Sobolev spaces
 equipped with norms $\parallel\cdot\parallel$, $\parallel\cdot\parallel_{\infty}$ and $\parallel\cdot\parallel_m$.
The inner product of $L^2(\Omega)$ is denoted by $(\cdot, \cdot)$.
Furthermore,
$$
H_0^1(\Omega)=\{v\in H^1(\Omega)| \, v|_{\partial\Omega}=0\}.
$$
We denote by $L^\infty(0,T;H^m(\Omega))$ the space of the measurable functions
$u:[0,T]\rightarrow H^m(\Omega),$ such that
$$
\|u\|_{L^\infty(H^m)}=\operatorname*{ess\ sup}\limits_{0\leq t\leq T}\|u(t)\|_m<\infty.
$$

Let $N$ be a positive integer. We denote by $P_N(\Omega)$ be the space of all polynomials of degree no greater than $N$.
The approximation space $V_N^0$ is defined as
$$
V_N^0=\big(P_N(I_x)\otimes P_N(I_y)\big)\cap H_0^1(\Omega).
$$
Define the orthogonal projection $\Pi_N^{1,0}: H_0^1(\Omega)\rightarrow V_N^0,$
such that
$$
\big(\nabla (\Pi_N^{1,0}u-u), \nabla \phi\big)=0, \ \ \forall \phi\in V_N^0.
$$
Throughout this paper $c$ is a generic positive constant independent of $N$ .
Now we introduce the property of the projector $\Pi_N^{1,0}.$
\begin{lemma} [see \cite{CY}]\label{touying}
Let $s$ and $r$ be real numbers satisfying $0\leq s\leq r$. Then there
exist a projector $\Pi_N^{1,0}$ and a positive constant $c$ depending
only on $r$ such that for any function $u \in H^s(\Omega)\cap H^r(\Omega),$
the following estimate holds:
$$
\|u-\Pi_N^{1,0}u\|_{H^s(\Omega)}\leq cN^{s-r}\|u\|_{H^r(\Omega)}.
$$
\end{lemma}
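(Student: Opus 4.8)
The plan is to reduce the two-dimensional estimate to one-dimensional Legendre approximation results, and then recover the general norm index $s$ by combining the energy-optimality of the projection, an Aubin--Nitsche duality argument, and Sobolev-space interpolation. The starting point is the defining property of $\Pi_N^{1,0}$: because it is the orthogonal projection with respect to the inner product $(\nabla\cdot,\nabla\cdot)$, and this inner product is equivalent to $\|\cdot\|_1^2$ on $H_0^1(\Omega)$ by the Poincar\'e--Friedrichs inequality, the projection error coincides with the best approximation error in the $H^1$-seminorm,
$$\|\nabla(u-\Pi_N^{1,0}u)\| = \inf_{\phi\in V_N^0}\|\nabla(u-\phi)\|.$$

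First I would treat the case $s=1$. It suffices to exhibit one good comparison polynomial $\phi\in V_N^0$, which I would build as the tensor product of one-dimensional $H_0^1$-orthogonal projections on $I_x$ and $I_y$; the tensor construction preserves the homogeneous boundary values, so $\phi$ indeed lies in $V_N^0$. Invoking the classical one-dimensional Legendre approximation estimate, obtained by expanding $u$ in Legendre polynomials and bounding the truncated tail through the decay of the coefficients that is controlled by $\|u\|_{H^r}$, this yields $\|u-\Pi_N^{1,0}u\|_{H^1(\Omega)} \leq cN^{1-r}\|u\|_{H^r(\Omega)}$. Next I would obtain the case $s=0$ by duality. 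Writing $e = u-\Pi_N^{1,0}u$, I solve the auxiliary problem $-\Delta w = e$ in $\Omega$ with $w|_{\partial\Omega}=0$, for which elliptic regularity gives $\|w\|_{H^2}\leq c\|e\|$. Then integration by parts together with the Galerkin orthogonality $(\nabla e,\nabla\phi)=0$ for all $\phi\in V_N^0$ gives
$$\|e\|^2 = (\nabla e,\nabla w) = \big(\nabla e,\nabla(w-\Pi_N^{1,0}w)\big) \leq \|\nabla e\|\,\|\nabla(w-\Pi_N^{1,0}w)\| \leq cN^{-1}\|\nabla e\|\,\|w\|_{H^2},$$
and dividing by $\|e\|$ and inserting the $s=1$ bound yields $\|e\|\leq cN^{-r}\|u\|_{H^r}$. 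The intermediate range $0<s<1$ then follows by interpolation between the $s=0$ and $s=1$ estimates in the Sobolev scale.

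The hard part will be the range $1<s\leq r$, where the energy-optimality of the projection no longer applies directly and the abstract Hilbert-space arguments above are insufficient. Here I would return to the one-dimensional Legendre expansion of $u$ and estimate $\|u-\Pi_N^{1,0}u\|_{H^s}$ directly from the decay of the expansion coefficients, taking care that the constraint of vanishing boundary data built into $V_N^0$ does not degrade the convergence rate; the tensor-product structure then lifts the one-dimensional bound to $\Omega$. This coefficient-level analysis is the technically delicate step, and it is precisely the content established in the cited reference \cite{CY}, on which I would rely for the sharp one-dimensional constants.
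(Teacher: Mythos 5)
The paper contains no proof of this lemma at all: it is quoted as a known result from Bernardi and Maday \cite{CY}, so there is no internal argument to compare against. Your sketch is, for the range $0\le s\le 1$, a correct reconstruction of the standard literature proof rather than a different method: the identification of the projection error with the best approximation error in the $H^1$-seminorm (valid by Poincar\'e--Friedrichs on $H_0^1(\Omega)$), the tensor-product comparison polynomial built from one-dimensional projections for $s=1$, the Aubin--Nitsche duality for $s=0$ (legitimate here because $\Omega$ is a convex polygon, so the auxiliary Dirichlet problem $-\Delta w=e$, $w|_{\partial\Omega}=0$ enjoys full $H^2$ regularity and $\|w\|_{H^2}\le c\|e\|$), and Sobolev interpolation in between. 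All of these steps are sound, including the use of Galerkin orthogonality in the duality identity, which holds because $e=u-\Pi_N^{1,0}u$ lies in $H_0^1(\Omega)$.

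The one substantive caveat concerns the range $1<s\le r$, which you correctly single out as the hard part and then defer to \cite{CY}. Be aware that this is also the weakest point of the lemma as the paper states it: the classical theorems for the $H_0^1$-orthogonal projector give the optimal rate $N^{s-r}$ only for $0\le s\le 1$, and the naive extension to higher norms via inverse inequalities yields the degraded rate $N^{2s-1-r}$, not $N^{s-r}$. This is not an idle technicality for this paper, since its convergence proof invokes the lemma with $s=4$ (this is the origin of the hypothesis $r\ge 4$ and of the factor $N^{8-2r}$ in the estimate of the mixed-derivative term), so the high-norm case genuinely carries load. Your deferral is no worse than what the paper itself does --- the paper defers the entire lemma --- but neither your sketch nor the paper's citation actually closes the $s>1$ case, and an honest complete proof would need either a sharper projector-specific argument or a reformulation of the convergence proof that avoids norms of order above one.
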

We also give the Gronwall inequality.
\begin{lemma} [see \cite{AA}]\label{gn}
Assume that $y_1\geq0$, $h_n,\ \varphi_n$ are non-negative sequences
and $\varphi_n$ satisfies
\begin{numcases}{}
\varphi_0\leq y_0,  \nonumber\\
\varphi_n\leq y_0+\tau\sum_{j=0}^{n-1}h_j\varphi_j,\ \ n\geq 1.\nonumber
\end{numcases}
Then it follows
\begin{equation*}
\varphi_n\leq y_0\exp\Big(\tau\sum_{j=0}^{n-1}h_j\Big),\ \ n\geq 1.
\end{equation*}
\end{lemma}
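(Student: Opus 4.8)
The plan is to reduce the cumulative inequality satisfied by $\varphi_n$ to a one-step recurrence for a majorizing sequence, and then iterate that recurrence. First I would introduce the auxiliary sequence
$$
\psi_n := y_0 + \tau\sum_{j=0}^{n-1} h_j \varphi_j, \quad n\geq 1,
$$
together with $\psi_0 := y_0$, so that the hypothesis reads precisely $\varphi_n \leq \psi_n$ for every index (the case $n=0$ being the assumption $\varphi_0 \leq y_0 = \psi_0$). Since $\tau>0$ and the $h_j,\varphi_j$ are non-negative, $\psi_n$ is non-negative and non-decreasing, which is exactly what is needed to substitute safely later.

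The key step is to notice that $\psi_n$ obeys a single-term recurrence. Subtracting consecutive terms gives $\psi_{n+1}-\psi_n=\tau h_n\varphi_n$, and inserting the comparison $\varphi_n\leq\psi_n$ — here non-negativity of $h_n$ is essential so that the inequality direction is preserved — yields $\psi_{n+1}\leq(1+\tau h_n)\psi_n$. I would then apply the elementary bound $1+x\leq e^{x}$ to each factor, obtaining $\psi_{n+1}\leq e^{\tau h_n}\psi_n$. Iterating from $\psi_0=y_0$ telescopes the exponents into a single sum,
$$
\psi_n \leq \psi_0\prod_{j=0}^{n-1} e^{\tau h_j} = y_0\exp\Big(\tau\sum_{j=0}^{n-1} h_j\Big),
$$
and combining this with $\varphi_n\leq\psi_n$ delivers the claimed estimate for all $n\geq 1$.

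As for the main obstacle, there is no deep difficulty: this is a classical discrete Gronwall inequality. The only points requiring care are the correct setup of the majorizing sequence $\psi_n$ (in particular declaring $\psi_0=y_0$ so the recursion starts cleanly) and the consistent use of the sign conditions when replacing $\varphi_n$ by $\psi_n$. An alternative route by direct induction on $n$ reaches the same conclusion: assuming $\varphi_j\leq y_0 e^{S_j}$ with $S_j:=\tau\sum_{i=0}^{j-1}h_i$, one substitutes into the right-hand side and uses $e^{x}-1\geq x$ to establish the telescoping bound $\sum_{j=0}^{n-1}\tau h_j e^{S_j}\leq e^{S_n}-1$, which closes the induction. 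I would present the comparison-sequence argument, as it is the shortest and isolates the one essential elementary inequality $1+x\leq e^{x}$.
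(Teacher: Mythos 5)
Your proof is correct, but there is nothing in the paper to compare it against: Lemma \ref{gn} is stated with the citation ``see \cite{AA}'' (Quarteroni and Valli) and the paper supplies no proof of its own, using the lemma only as a black box in Theorems \ref{stb} and \ref{conv}. Your comparison-sequence argument (majorize $\varphi_n$ by $\psi_n$, derive the one-step recurrence $\psi_{n+1}\leq(1+\tau h_n)\psi_n$, and telescope with $1+x\leq e^{x}$) is the standard textbook proof of this discrete Gronwall inequality, and every step checks out. One small point worth flagging: the non-negativity of $\psi_n$, which you need to pass from $(1+\tau h_n)\psi_n$ to $e^{\tau h_n}\psi_n$, requires $y_0\geq 0$; the hypothesis ``$y_1\geq 0$'' in the statement is evidently a typo for ``$y_0\geq 0$'', and your reading of it that way is the correct one.
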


Now we recall some definitions of fractional calculus.
For a given function $f(t),$\ $\alpha>0$, we denote by
$_{RL}D_{0,t}^{-\alpha} f(t)$ the left side Riemann-Liouville fractional integral of order
$\alpha$ which is defined as \cite{IP}
\begin{equation}
_{RL}D_{0,t}^{-\alpha} f(t)=\frac{1}{\Gamma(\alpha)}\int_0^t (t-s)^{\alpha-1}f(s){\rm d}s,\ \ t>0.
\end{equation}
For $n-1<\alpha<n,$ we denote by $_{RL}D_{0,t}^\alpha f(t)$ the left-sided
Riemann-Liouville fractional derivative of order $\alpha$ which is defined as
\begin{equation}
_{RL}D_{0,t}^\alpha f(t)=\frac{1}{\Gamma(n-\alpha)}\partial_t^n\int_0^t (t-s)^{n-\alpha-1}f(s){\rm d}s,\ \ t>0.
\end{equation}
For $n-1<\alpha<n,$ we denote by $_{C}D_{0,t}^\alpha f(t)$ the left-sided Caputo fractional derivative of order $\alpha$ which is defined as
\begin{equation}
_{C}D_{0,t}^\alpha f(t)=\frac{1}{\Gamma(n-\alpha)}\int_0^t (t-s)^{n-\alpha-1}\partial_t^n f(s){\rm d}s,\ \ t>0.
\end{equation}
We also have the following formula \cite{KD}
\begin{equation}\label{crl}
_{RL}D_{0,t}^{-\alpha} {_C}D_{0,t}^\alpha f(t)=f(t)-\sum_{k=0}^{n-1}\frac{\partial_t^n f(0)}{k!}t^k.
\end{equation}
The property of the fractional derivatives and  integrals is that
for any $\alpha, \beta>0,$ we have
\begin{equation}\label{p1}
_{RL}D_{0,t}^{-\alpha}{_{RL}}D_{0,t}^{\beta} f(t)=_{RL}D_{0,t}^{-\alpha+\beta} f(t)
\end{equation}

\section{An ADI spectral scheme}
For the approximation of the Riemann-Liouville fractional derivative,
 one can continuously extend the solution $u(x,y,t)$ to be zero for $t < 0$ if $u(x,y,0) = 0.$
We use the weighted and shifted Gr\"unwald difference to discretize the
 Riemann-Liouville fractional derivative.
Thus we assume that $u(x,y,0)=0,$
otherwise, we can consider $\tilde u=u-u_0.$

Let $\tau$ be the time step size and $M$ be a positive integer with
$\tau=T/M$ and $t_n=n\tau$ for $n=0,1,...,M.$ For the function
$u(x,y,t),$ denote $u^n=u(\cdot,t_n)$
and
$$
\delta_t u^{k+\frac12}=\frac{u^{k+1}-u^k}{\tau},\ \ \ u^{k+\frac12}=\frac{u^{k+1}+u^k}{2}
$$
Taking the operation ${_{RL}}D_{0,t}^{-\beta}$ on both sides of the equation \eqref{y1},
using the properties \eqref{crl}--\eqref{p1},
then the equation \eqref{y1} is equivalent to the following form
\begin{equation}\label{yy1}
\partial_t u+\sum_{j=1}^Qa_j\, {_{RL}}D_{0,t}^{\beta_j} u
= \mu\, {_{RL}}D_{0,t}^{-\beta}\Delta u+g,
\end{equation}
where $\beta=\alpha-1$, $\beta_j=\alpha_j-\alpha+1$
and
\begin{equation}\label{g}
g=\partial_t u_0+\sum_{j=1}^{Q'}a_j\, {_{RL}}D_{0,t}^{\alpha_j-\alpha} \partial_t u_0
+{_{RL}}D_{0,t}^{-\beta}f.
\end{equation}

We can discrete the operator ${_{RL}}D_{0,t}^{\beta}$ as in  \cite{WZB} (see Lemma 2.2) which are shifted Gr\"unwald approximations,
that is for any $-1\leq \beta\leq 1,$ we have
\begin{equation}\label{daoshu}
{_{RL}}D_{0,t}^\beta u^{k+1}=\tau^{-\beta}\sum_{j=0}^{k+1}\lambda_j^{(\beta)}u^{k+1-j}+O(\tau^2)
\triangleq D_\tau^{\beta, k+1} u+O(\tau^2),
\end{equation}
where
\begin{equation}\label{daoshu1}
\lambda_0^{(\beta)}=\Big(1+\frac{\beta}{2}\Big)g_0^{(\beta)},\ \
\lambda_j^{(\beta)}=\Big(1+\frac{\beta}{2}\Big)g_j^{(\beta)}-\frac{\beta}{2}g_{j-1}^{(\beta)},
\ \ j\geq 1
\end{equation}
and
$g_j^{(\beta)}=(-1)^j\binom{\beta}{j} $ for $j\geq 0.$

Thus we obtain the time discretization for the equation \eqref{y1}:
\begin{eqnarray*}
\begin{aligned}
\delta_t u^{k+\frac12}+\sum_{i=1}^Qa_i\, D_\tau^{\beta_i, k+\frac12} u
=\mu\,D_\tau^{-\beta, k+\frac12}\Delta u
+g^{k+\frac12}+O(\tau^2).
\end{aligned}
\end{eqnarray*}

For space discretization, we use Legendre spectral method in both $x$ and $y$
directions. Then the fully discrete scheme for equation \eqref{yy1} is to find
$u_N^{k+1}\in V_N^0$ such that
\begin{eqnarray}\label{f1}
&&\Big(\delta_t u_N^{k+\frac12}, v\Big)
+\sum_{i=1}^Qa_i\, \Big(D_\tau^{\beta_i, k+\frac12} u_N, v\Big) \nonumber\\
&=&-\mu\,\Big(D_\tau^{-\beta, k+\frac12}\nabla u_N, \nabla v\Big)
+\Big(g^{k+\frac12}, v\Big),\ \forall v\in V_N^0.
\end{eqnarray}

As is known to all that ADI method can
significantly reduce the computation time and storage requirements for problems defined in
two dimensional spatial domain. This advantage motivates us to establish an ADI spectral scheme.

Denoting
$$
p^2=1+\frac12\sum_{i=1}^Qa_i\lambda_0^{(\beta_i)}\tau^{1-\beta_i},\ \
q=\frac{\mu}{2}\lambda_0^{(\beta)}\tau^{1+\beta}.
$$
Then, we get the following ADI spectral scheme:
\begin{eqnarray}\label{f3}
&&\Big(\delta_t u_N^{k+\frac12}, v\Big)
+\sum_{i=1}^Qa_i\, \Big(D_\tau^{\beta_i, k+\frac12} u_N, v\Big)
+\frac{q^2}{ p^2}\Big(\partial_x\partial_y \delta_t u_N^{k+\frac12}, \partial_x\partial_y v\Big)\nonumber\\
&=&-\mu\,\Big(D_\tau^{-\beta, k+\frac12}\nabla u_N, \nabla v\Big)
+\Big(g^{k+\frac12}, v\Big).
\end{eqnarray}

\section{Stability and convergence}
In this section, we prove the stability and convergence for the ADI spectral scheme \eqref{f3}.
\begin{lemma}[see \cite{WZB}]\label{co}
For any positive
integer $k$ and real vector $(v_1,v_2,...,v_k)^T \in \mathbb{R},$  it holds
$$
\sum_{n=0}^{k-1}\Big(\sum_{j=0}^n \lambda_j^{(\beta)}v_{n+1-j}\Big)v_{n+1}\geq 0.
$$
\end{lemma}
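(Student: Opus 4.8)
The plan is to read the double sum as a quadratic form generated by a lower-triangular Toeplitz matrix, and to establish its nonnegativity through the generating function (symbol) of the coefficients $\lambda_j^{(\beta)}$. Reindexing by $m=n+1$ and $l=m-j$, the sum becomes
$$
S=\sum_{m=1}^{k}\sum_{l=1}^{m}\lambda_{m-l}^{(\beta)}v_m v_l=v^{T}Av,
$$
where $A$ is the $k\times k$ lower-triangular Toeplitz matrix with entries $A_{ml}=\lambda_{m-l}^{(\beta)}$ for $l\le m$ and $A_{ml}=0$ otherwise. Since $S$ is a scalar, $S=\tfrac12 v^{T}(A+A^{T})v$, so the assertion is equivalent to the positive semidefiniteness of the symmetric part of $A$.

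First I would reduce this algebraic statement to a pointwise inequality for the symbol. Introducing the generating function $W(z)=\sum_{j\ge0}\lambda_j^{(\beta)}z^j$ and the trigonometric polynomial $\hat v(\theta)=\sum_{m=1}^{k}v_m e^{-\mathrm{i}m\theta}$, a Parseval computation shows that extracting the $\theta$-independent Fourier coefficient of $W(e^{\mathrm{i}\theta})\,|\hat v(\theta)|^2$ reproduces exactly $\sum_{m\ge l}\lambda_{m-l}^{(\beta)}v_m v_l$. As $S$ is real, this gives
$$
S=\frac{1}{2\pi}\int_{-\pi}^{\pi}\mathrm{Re}\big(W(e^{\mathrm{i}\theta})\big)\,|\hat v(\theta)|^2\,\mathrm{d}\theta .
$$
Hence it suffices to prove $\mathrm{Re}\big(W(e^{\mathrm{i}\theta})\big)\ge 0$ for every $\theta\in[-\pi,\pi]$ and every admissible $\beta$, since $|\hat v(\theta)|^2\ge0$.

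Next I would compute $W$ explicitly. Since $\sum_{j\ge0}g_j^{(\beta)}z^j=(1-z)^\beta$, the definition \eqref{daoshu1} yields
$$
W(z)=\Big(1+\tfrac{\beta}{2}\Big)(1-z)^\beta-\tfrac{\beta}{2}\,z\,(1-z)^\beta=\Big(1+\tfrac{\beta}{2}(1-z)\Big)(1-z)^\beta .
$$
Substituting $z=e^{\mathrm{i}\theta}$ and using $1-e^{\mathrm{i}\theta}=2\sin(\theta/2)\,e^{\mathrm{i}(\theta-\pi)/2}$, the factor $(1-e^{\mathrm{i}\theta})^\beta=(2\sin(\theta/2))^\beta\,e^{\mathrm{i}\beta(\theta-\pi)/2}$ has nonnegative modulus. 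Therefore $\mathrm{Re}\big(W(e^{\mathrm{i}\theta})\big)$ factors as $(2\sin(\theta/2))^\beta$ times the real part of the bounded quantity $\big(1+\tfrac{\beta}{2}(1-e^{\mathrm{i}\theta})\big)e^{\mathrm{i}\beta(\theta-\pi)/2}$, whose amplitude is $1+\beta\sin^2(\theta/2)\ge0$.

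The main obstacle is this last trigonometric inequality: showing that the rotated factor keeps a nonnegative real part over the whole range of $\theta$, uniformly for $-1\le\beta\le1$. The difficulty is that the amplitude $1+\beta\sin^2(\theta/2)$ and the rotation angle $\beta(\theta-\pi)/2$ both vary with $\theta$, and near $\theta=\pm\pi$ the rotation approaches $\pm\beta\pi/2$, where cancellation is most severe. I would handle it by writing the expression in the form $\rho(\theta)\cos(\phi(\theta))$ and verifying $|\phi(\theta)|\le\pi/2$ via monotonicity estimates on the phase $\phi$, as in \cite{WZB}; this case analysis in $\theta$ and $\beta$ is the technical heart of the proof, whereas the Toeplitz reduction, the Parseval identity, and the symbol computation are all routine.
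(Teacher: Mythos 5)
The paper never proves this lemma itself: it is imported verbatim from \cite{WZB}, so the only proof to compare against is the one in that reference. Your reduction is exactly the framework used there: rewrite the double sum as the quadratic form $v^{T}Av$ of the lower-triangular Toeplitz matrix generated by $\lambda_j^{(\beta)}$, pass to the symbol via the Parseval/Grenander--Szeg\H{o} identity, and compute $W(z)=\big(1+\tfrac{\beta}{2}(1-z)\big)(1-z)^{\beta}$. All of that is correct (modulo a small technical point you should acknowledge: for the fractional-integral range $-1<\beta<0$ the series $\sum_j\lambda_j^{(\beta)}e^{\mathrm{i}j\theta}$ converges only conditionally on the unit circle, so the Parseval step needs an Abel-limit or dominated-convergence justification).

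The genuine gap is that your argument stops exactly where the mathematical content of the lemma begins. The pointwise inequality $\mathrm{Re}\,W(e^{\mathrm{i}\theta})\geq 0$ --- which you yourself call the ``technical heart'' --- is never established; it is deferred to ``monotonicity estimates on the phase, as in \cite{WZB}''. Since the statement being proved \emph{is} the lemma of \cite{WZB}, invoking that reference for the decisive step makes the proposal circular: you have reconstructed the setup of the cited proof but not its substance. Worse, the one quantitative claim you do make in that direction is incorrect: $1+\beta\sin^{2}(\theta/2)$ is the \emph{real part} of the factor $1+\tfrac{\beta}{2}(1-e^{\mathrm{i}\theta})$, not the ``amplitude'' of $\big(1+\tfrac{\beta}{2}(1-e^{\mathrm{i}\theta})\big)e^{\mathrm{i}\beta(\theta-\pi)/2}$; the modulus of that product is $\sqrt{\big(1+\beta\sin^{2}(\theta/2)\big)^{2}+\beta^{2}\sin^{2}(\theta/2)\cos^{2}(\theta/2)}$, and in any case nonnegativity of a modulus is vacuous. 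What actually has to be shown is that the \emph{total} phase $\phi(\theta)=\tfrac{\beta(\theta-\pi)}{2}-\arctan\frac{\beta\sin(\theta/2)\cos(\theta/2)}{1+\beta\sin^{2}(\theta/2)}$ satisfies $|\phi(\theta)|\leq\pi/2$ for all $\theta$ and for every order the paper needs --- both $\beta_i\in(-1,1)$ and the negative order $-\beta\in(-1,0)$ --- and this trigonometric case analysis is entirely absent. Until it is supplied, what you have is a correct and standard reduction, not a proof.
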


It is easy to verify that the inner product form also holds by integrating the former equation
 in $x$ and $y$ direction, respectively, i.e.
\begin{equation}\label{jj1}
\sum_{n=0}^{k-1}\Big(\sum_{j=0}^n \lambda_j^{(\beta)}v_{n+1-j},\, v_{n+1}\Big)\geq 0.
\end{equation}
According to \eqref{jj1} and the assumption $u_N^0=0$, we know that
\begin{equation}\label{jj}
\sum_{n=0}^{k-1}\Big(D_{\tau}^{\beta, n+1}u_N,\, u_N^{n+1}\Big)\geq 0.
\end{equation}
Now we present the stability result for the fully discrete scheme \eqref{f3}.
\begin{theorem}\label{stb}
The ADI spectral scheme \eqref{f3} is stable under the condition $\tau<1$,
it satisfies
$$
\|u_N^n\|^2\leq
\exp(2T)\Big(2\tau \sum_{k=0}^{n-1}\|g^{k+\frac12}\|^2\Big),
$$
where $g$ is defined by the equation \eqref{g}.
\end{theorem}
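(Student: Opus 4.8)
The plan is to run a discrete energy argument: test the scheme \eqref{f3} with $v=u_N^{k+\frac12}$, multiply by $2\tau$, and sum over $k=0,\dots,n-1$, throughout exploiting the standing assumption $u_N^0=0$. I expect the two $\delta_t$-terms to telescope into $\|u_N^n\|^2$ plus a nonnegative fourth-order remainder, the three fractional contributions to be sign-definite via Lemma~\ref{co}, and the forcing to be absorbed by Young's inequality followed by the discrete Gronwall Lemma~\ref{gn}.

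First I would treat the local-in-time terms. Writing $\delta_t u_N^{k+\frac12}=(u_N^{k+1}-u_N^k)/\tau$ and $u_N^{k+\frac12}=(u_N^{k+1}+u_N^k)/2$, the identity $2\tau\big(\delta_t u_N^{k+\frac12},u_N^{k+\frac12}\big)=\|u_N^{k+1}\|^2-\|u_N^k\|^2$ telescopes to $\|u_N^n\|^2$ after summation, using $u_N^0=0$. The same computation applied to the mixed ADI term gives $\tfrac{q^2}{p^2}\|\partial_x\partial_y u_N^n\|^2\ge0$, which I simply keep on the left as a nonnegative quantity.

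The crux is the sign of the fractional sums $\sum_{k=0}^{n-1}\big(D_\tau^{\beta_i,k+\frac12}u_N,u_N^{k+\frac12}\big)$ and $\sum_{k=0}^{n-1}\big(D_\tau^{-\beta,k+\frac12}\nabla u_N,\nabla u_N^{k+\frac12}\big)$. The difficulty is that the operator here is the half-step average $D_\tau^{\beta,k+\frac12}=\tfrac12\big(D_\tau^{\beta,k+1}+D_\tau^{\beta,k}\big)$, whereas the positivity \eqref{jj1}--\eqref{jj} is phrased at integer levels. My plan is to note that midpoint averaging $w\mapsto\tfrac12(w^{m}+w^{m-1})$ is itself a convolution and therefore commutes with the Gr\"unwald convolution \eqref{daoshu}. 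Introducing $v_m:=u_N^{m-\frac12}=\tfrac12\big(u_N^{m}+u_N^{m-1}\big)$ (so that $v_1=\tfrac12 u_N^1$ since $u_N^0=0$), one checks that $D_\tau^{\beta,k+\frac12}u_N=\tau^{-\beta}\sum_{j=0}^{k+1}\lambda_j^{(\beta)}v_{k+1-j}$ and $u_N^{k+\frac12}=v_{k+1}$. Hence each fractional sum is \emph{exactly} the quadratic form of Lemma~\ref{co} evaluated on the sequence $\{v_m\}$, and is therefore nonnegative; the argument is valid for the order $-\beta$ as well, since $\beta=\alpha-1\in(0,1)$ forces $-1\le-\beta\le1$. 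Consequently the diffusion contribution equals $-\mu\le0$ times a nonnegative sum, so it too can be moved to the left-hand side with a favourable sign. I regard verifying this commutation and the ensuing positivity as the main obstacle.

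After discarding all the nonnegative terms on the left, I am left with $\|u_N^n\|^2\le 2\tau\sum_{k=0}^{n-1}\big(g^{k+\frac12},u_N^{k+\frac12}\big)$. I would then apply Cauchy--Schwarz and Young to obtain $2\tau\big(g^{k+\frac12},u_N^{k+\frac12}\big)\le\tau\|g^{k+\frac12}\|^2+\tau\|u_N^{k+\frac12}\|^2$, and bound $\|u_N^{k+\frac12}\|^2\le\tfrac12\big(\|u_N^{k+1}\|^2+\|u_N^k\|^2\big)$, which reproduces $\|u_N^n\|^2$ on the right with coefficient $\tfrac{\tau}{2}$. Here the hypothesis $\tau<1$ enters: it guarantees $(1-\tfrac{\tau}{2})^{-1}<2$, so absorbing that term yields
\[
\|u_N^n\|^2\le 2\tau\sum_{k=0}^{n-1}\|g^{k+\frac12}\|^2+2\tau\sum_{j=0}^{n-1}\|u_N^j\|^2 .
\]
Finally I invoke the discrete Gronwall inequality Lemma~\ref{gn} with $y_0=2\tau\sum_{k=0}^{n-1}\|g^{k+\frac12}\|^2$ and $h_j\equiv2$, and use $n\tau\le T$, to conclude $\|u_N^n\|^2\le\exp(2T)\big(2\tau\sum_{k=0}^{n-1}\|g^{k+\frac12}\|^2\big)$, which is the claimed stability estimate.
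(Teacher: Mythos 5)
Your proposal is correct and takes essentially the same route as the paper: test \eqref{f3} with $v=2u_N^{k+\frac12}$, telescope the $\delta_t$ and mixed-derivative terms using $u_N^0=0$, drop the sign-definite fractional terms via Lemma~\ref{co}/\eqref{jj}, then apply Cauchy--Schwarz, Young's inequality with $\tau<1$, and the discrete Gronwall Lemma~\ref{gn}. Your explicit check that the half-step averaging commutes with the Gr\"unwald convolution---so that the positivity lemma applies to the averaged sequence $v_m=u_N^{m-\frac12}$ with $v_0=0$---is a careful justification of a step the paper passes over by citing \eqref{jj} directly, but it is a refinement of the same argument rather than a different approach.
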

\begin{proof}
Taking $v=2u_N^{k+\frac12}$ in equation \eqref{f3}, we infer that
\begin{equation}\label{s1}
\begin{aligned}
\frac{1}{\tau}(\|u_N^{k+1}\|^2-\|u_N^k\|^2)
+&2\sum_{i=1}^Qa_i\Big(D_\tau^{\beta_i, k+\frac12}u_N, u_N^{k+\frac12}\Big)
+\frac{q^2}{\tau p^2}\big(\|\partial_x\partial_y u_N^{k+1}\|^2-\|\partial_x\partial_y u_N^k\|^2\big)\\
&=-2\mu  \Big(D_\tau^{\beta, k+\frac12}\nabla u_N, \nabla u_N^{k+\frac12}\Big)
+2\big(g^{k+\frac12}, u_N^{k+\frac12}\big).
\end{aligned}
\end{equation}
Summing up the equation \eqref{f3} for $k$ from $0$ to $n-1$,
using \eqref{jj},
H\"older inequality, Young's inequality,
and noticing that
$u_N^0=0$, we obtain
\begin{equation}\label{s2}
\begin{aligned}
\|u_N^n\|^2+\frac{q^2}{p^2}\|\partial_x\partial_yu_N^{n}\|^2\leq&
2\tau \sum_{k=0}^{n-1}\big\|g^{k+\frac12}\big\|\big\|u_N^{k+\frac12}\big\|\\
\leq&
\frac{\tau}{2}\sum_{k=0}^{n-1}\|u_N^{k+1}\|^2
+\frac{\tau}{2}\sum_{k=0}^{n-1}\|u_N^k\|^2
+\tau \sum_{k=0}^{n-1}\big\|g^{k+\frac12}\big\|^2\\
\leq&
\frac{1}{2}\|u_N^n\|^2
+\tau \sum_{k=0}^{n-1}\|u_N^k\|^2
+\tau \sum_{k=0}^{n-1}\big\|g^{k+\frac12}\big\|^2.
\end{aligned}
\end{equation}
Namely,
$$
\|u_N^n\|^2\leq
2\tau \sum_{k=0}^{n-1}\|u_N^k\|^2
+2\tau \sum_{k=0}^{n-1}\|g^{k+\frac12}\|^2.
$$
Using the Gronwall inequality, we deduce that
\begin{equation}
\|u_N^n\|^2\leq
\exp(2T)\Big(2\tau \sum_{k=0}^{n-1}\|g^{k+\frac12}\|^2\Big).
\end{equation}
\end{proof}

Now we give the error estimate for the ADI spectral scheme \eqref{f3}.
\begin{theorem}\label{conv}
Let $u$ be the exact solution of the equation \eqref{y1}--\eqref{y3} and
$u_N^k$ be the solution of the equation \eqref{f3}. Assume that
$u,{_{RL}}D_{0,t}^{\beta_i} u\in L^\infty(0,T; H^r(\Omega)),$
$\partial_t u\in L^2(0,T; H^r(\Omega)),$
$u_{ttt}\in L^\infty(0,T; L^2(\Omega)),$
$r\geq 4$ and $\tau<1,$
then we have
\begin{eqnarray*}
\begin{aligned}
\|u^k-u_N^k\|^2\leq c(N^{-2r}+\tau^4),
\end{aligned}
\end{eqnarray*}
where $c$ is a positive constant which is independent of $N$ and $\tau$.
\end{theorem}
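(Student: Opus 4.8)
The plan is to mirror the energy argument of Theorem~\ref{stb}, but applied now to an error equation rather than to the scheme itself. First I would split the error through the $H^1_0$-projection of Lemma~\ref{touying}, writing
$$
u^k-u_N^k=(u^k-\Pi_N^{1,0}u^k)+(\Pi_N^{1,0}u^k-u_N^k)\triangleq \eta^k+e^k .
$$
The projection part is immediate: Lemma~\ref{touying} with $s=0$ gives $\|\eta^k\|\leq cN^{-r}\|u^k\|_r\leq cN^{-r}$ under the hypothesis $u\in L^\infty(0,T;H^r)$, so it only remains to estimate the fully discrete part $e^k\in V_N^0$.

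To obtain an equation for $e^k$, I would insert the exact solution into the scheme \eqref{f3}. Since $u$ solves \eqref{yy1} exactly, the time discretisation \eqref{daoshu} together with the Crank--Nicolson averaging produces a temporal consistency residual, and after adding the ADI term $\frac{q^2}{p^2}(\partial_x\partial_y\delta_t u^{k+\frac12},\partial_x\partial_y v)$ to both sides I would subtract \eqref{f3}. The decisive simplification is that the diffusion term involves only gradients: by the defining orthogonality $(\nabla\eta^j,\nabla v)=0$ for $v\in V_N^0$, every projection contribution to $-\mu(D_\tau^{-\beta,k+\frac12}\nabla(\cdot),\nabla v)$ vanishes, so the gradient error disappears entirely. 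The resulting equation for $e^k$ therefore has exactly the form of \eqref{f3} with right-hand side $(\mathcal R^{k+\frac12},v)$, where $\mathcal R^{k+\frac12}$ collects (i) the temporal truncation error, (ii) the mass-type projection terms $\delta_t\eta^{k+\frac12}$ and $\sum_i a_i D_\tau^{\beta_i,k+\frac12}\eta$, and (iii) the projection part of the ADI perturbation.

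The core estimate then repeats the proof of Theorem~\ref{stb}: take $v=2e^{k+\frac12}$, sum $k$ from $0$ to $n-1$, discard the nonnegative fractional contributions via \eqref{jj} (applied both to $e$ and to $\nabla e$), telescope the nonnegative ADI seminorm term, and close with H\"older, Young and the Gronwall inequality of Lemma~\ref{gn}. This yields
$$
\|e^n\|^2\leq \exp(2T)\,\Big(2\tau\sum_{k=0}^{n-1}\|\mathcal R^{k+\frac12}\|^2\Big),
$$
and the theorem follows from the triangle inequality $\|u^n-u_N^n\|^2\leq 2\|\eta^n\|^2+2\|e^n\|^2$ once $\|\mathcal R^{k+\frac12}\|=O(\tau^2+N^{-r})$ is established (note $\tau\sum_{k=0}^{n-1}1\leq T$, so the summed residual is $O(\tau^4+N^{-2r})$).

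I expect the genuine work to lie entirely in bounding $\mathcal R^{k+\frac12}$. The temporal part requires that the shifted Gr\"unwald formula \eqref{daoshu} be second order in the $L^2(\Omega)$ norm uniformly in $k$, which is where the assumptions ${_{RL}}D_{0,t}^{\beta_i}u\in L^\infty(0,T;H^r)$ and $u_{ttt}\in L^\infty(0,T;L^2)$ enter, the latter controlling the replacement $\delta_t u^{k+\frac12}\approx\partial_t u^{k+\frac12}$. The mass-type projection terms are $O(N^{-r})$ by Lemma~\ref{touying}, using $\partial_t u\in L^2(0,T;H^r)$ for the discrete time derivative and the regularity of the fractional derivatives for the $D_\tau^{\beta_i}$ terms. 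The subtle term is the ADI residual: since $q=\frac{\mu}{2}\lambda_0^{(\beta)}\tau^{1+\beta}$ gives $q^2/p^2=O(\tau^{2+2\beta})$ with $\beta=\alpha-1\in(0,1)$, this factor is of higher order than $\tau^2$ and, combined with the $H^2$-type bound on $\partial_x\partial_y$ of the exact solution and of $\eta$ (here $r\geq 4$ is used), keeps the perturbation within the claimed accuracy. Verifying that these three groups of terms are all $O(\tau^2+N^{-r})$ in $L^2$, and in particular that the Crank--Nicolson-averaged operators $D_\tau^{\beta,k+\frac12}$ still admit the positivity of \eqref{jj} needed to discard them, is the main obstacle.
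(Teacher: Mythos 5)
Your proposal follows essentially the same route as the paper's own proof: the same splitting $u^k-u_N^k=\eta_N^k+e_N^k$ via $\Pi_N^{1,0}$, the same error equation in which the gradient orthogonality of the projector removes the diffusion-term contribution, the same energy argument with $v=2e_N^{k+\frac12}$, summation, the positivity \eqref{jj}, Young's inequality and Gronwall, and the same identification of the residual terms (temporal truncation, mass-type projection terms, and the $\frac{q^2}{p^2}$ ADI perturbation controlled by $q^2/p^2=O(\tau^{2+2\beta})$ together with $r\geq 4$). The points you flag as remaining work, including bounding the residual by $O(\tau^2+N^{-r})$ and the positivity of the averaged operators, are handled in the paper exactly along the lines you sketch.
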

\begin{proof}
Denote
$$
e^k=u^k-u_N^k=(u^k-\Pi_N^{1,0}u^k)+(\Pi_N^{1,0}u^k-u_N^k)\triangleq\eta_N^k+e_N^k.
$$
Notice that $e_N^0=0.$
From the equation \eqref{yy1} and the fully discrete scheme \eqref{f1},
we obtain the following error equation:
\begin{eqnarray}
\begin{aligned}
\Big(\delta_t e^{k+\frac12}, v\Big)+\sum_{i=1}^Q a_i
\Big(D_\tau^{\beta_i, k+\frac12} e, v\Big)
+&\frac{q^2}{p^2}\Big(\partial_x\partial_y\delta_t e^{k+\frac12}, \partial_x\partial_y v\Big)
=-\mu\,\tau^{\beta}\Big(D_\tau^{\beta, k+\frac12}\nabla e, \nabla v\Big)\\
&+\Big(R_\tau^{k+\frac12}, v\Big)+\frac{q^2}{p^2}\Big(\partial_x\partial_y\delta_tu^{k+\frac12}, \partial_x\partial_y v\Big),
\end{aligned}
\end{eqnarray}
where
$|R_\tau^{k+\frac12}|=\big|\delta_t u^{k+\frac12}-\partial_t u^{k+\frac12}\big|\leq c\tau^2.$

According to  $e^k=\eta_N^k+e_N^k$ and the definition of $\Pi_N^{1,0}$,
we infer that
\begin{eqnarray}
\begin{aligned}
&\Big(\delta_t e_N^{k+\frac12}, v\Big)+\sum_{i=1}^Q a_i
\Big(D_\tau^{\beta_i, k+\frac12} e_N, v\Big)
+\frac{q^2}{p^2}\Big(\partial_x\partial_y\delta_te_N^{k+\frac12}, \partial_x\partial_y v\Big)\\
=&
-\mu\tau^\beta\,\Big(D_\tau^{\beta_i, k+\frac12}\nabla e_N, \nabla v\Big)+\Big(R_\tau^{k+\frac12}, v\Big)-
\sum_{i=1}^Q a_i\Big(D_\tau^{\beta_i, k+\frac12} \eta_N, v\Big)\\
&-\Big(\delta_t \eta_N^{k+\frac12}, v\Big)-\frac{q^2}{p^2}\Big(\partial_x\partial_y\delta_t\eta_N^{k+\frac12}, \partial_x\partial_y v\Big)+\frac{q^2}{p^2}\Big(\partial_x\partial_y\delta_tu^{k+\frac12}, \partial_x\partial_y v\Big).
\end{aligned}
\end{eqnarray}
Summing up the above equation for $k$ from 0 to $n-1$,
then taking $v=2e_N^{k+\frac12}$,
using \eqref{jj},
we have
\begin{eqnarray}\label{w1}
\begin{aligned}
\|e_N^n\|^2
\leq&
2\tau\sum_{k=0}^{n-1}\Big(R_\tau^{k+\frac12}, e_N^{k+\frac12}\Big)
-2\tau \sum_{k=0}^{n-1}\sum_{i=1}^Q a_i\Big(D_\tau^{\beta_i, k+\frac12} \eta_N, e_N^{k+\frac12}\Big)
-2\tau\sum_{k=0}^{n-1}\Big(\delta_t \eta_N^{k+\frac12}, e_N^{k+\frac12}\Big)\\
&-\frac{2\tau q^2}{p^2}\sum_{k=0}^{n-1}\Big(\partial_x\partial_y\delta_t \eta_N^{k+\frac12},
\partial_x\partial_y e_N^{k+\frac12}\Big)
+\frac{2\tau q^2}{p^2}\sum_{k=0}^{n-1}\Big(\partial_x\partial_y\delta_t u^{k+\frac12},
\partial_x\partial_y e_N^{k+\frac12}\Big).
\end{aligned}
\end{eqnarray}
Now we estimate the  terms on the right hand of the inequality \eqref{w1}, respectively.
Using H\"older inequality, Young's inequality, and Lemma \ref{touying},
we get
\begin{eqnarray*}\label{w2}
\begin{aligned}
2\tau\sum_{k=0}^{n-1}\Big(R_\tau^{k+\frac12}, e_N^{k+\frac12}\Big)
\leq c\tau^4+\frac{\tau}{10}\sum_{k=0}^{n-1}\big(\|e_N^{k+1}\|^2+\|e_N^k\|^2\big),
\end{aligned}
\end{eqnarray*}
\begin{eqnarray*}\label{w3}
\begin{aligned}
-2\tau\sum_{k=0}^{n-1}\Big(\delta_t \eta_N^{k+\frac12}, e_N^{k+\frac12}\Big)
=&
-\tau\sum_{k=0}^{n-1}\bigg(\frac{1}{\tau}\int_{t_k}^{t_{k+1}}\partial_t \eta_N{\rm d}t, e_N^{k+1}+e_N^k\bigg)\\
\leq&
cN^{-2r}\int_0^T\|\partial_t u\|^2{\rm d}t
+\frac{\tau}{10}\sum_{k=0}^{n-1}\big(\|e_N^{k+1}\|^2+\|e_N^k\|^2\big),
\end{aligned}
\end{eqnarray*}
\begin{eqnarray*}\label{w4}
\begin{aligned}
-2\tau \sum_{k=0}^{n-1}\sum_{i=1}^Q a_i\Big(D_\tau^{\beta_i, k+\frac12} \eta_N,e_N^{k+\frac12}&\Big)
\leq
c\sum_{i=1}^Q a_i^2\big\|D_\tau^{\beta_i, k+\frac12}\big\|^2
+\frac{\tau}{10} \sum_{k=0}^{n-1}\big(\|e_N^{k+1}\|^2+\|e_N^k\|^2\big)\\
\leq&
cN^{-2r}\sum_{i=1}^Q a_i^2\|{_{RL}}D_{0,t}^{\beta_i} u\|_{L^\infty(H^r)}^2+c\tau^4
+\frac{\tau}{10}\big(\|e_N^{k+1}\|^2+\|e_N^k\|^2\big)
\end{aligned}
\end{eqnarray*}
and
\begin{eqnarray*}\label{w5}
\begin{aligned}
-\frac{2\tau q^2}{p^2}\sum_{k=0}^{n-1}\Big(\partial_x\partial_y\delta_t \eta_N^{k+\frac12},
\partial_x\partial_y &e_N^{k+\frac12}\Big)
=-\frac{2\tau q^2}{p^2}\sum_{k=0}^{n-1}\Big(\partial_x^2\partial_y^2\delta_t \eta_N^{k+\frac12},
e_N^{k+\frac12}\Big)\\
&\leq
cN^{8-2r}\tau^{4+4\beta}\int_0^T\|\partial_t u\|_{H^r}^2{\rm d}t
+\frac{\tau}{10}\sum_{k=0}^{n-1}\big(\|e_N^{k+1}\|^2+\|e_N^k\|^2\big)\\
&\leq
c\tau^4\int_0^T\|\partial_t u\|_{H^r}^2{\rm d}t
+\frac{\tau}{10}\sum_{k=0}^{n-1}\big(\|e_N^{k+1}\|^2+\|e_N^k\|^2\big).
\end{aligned}
\end{eqnarray*}
Similarly, we have
\begin{eqnarray*}\label{w6}
\begin{aligned}
\frac{2\tau q^2}{p^2}\sum_{k=0}^{n-1}\Big(\partial_x\partial_y\delta_t u^{k+\frac12},
\partial_x\partial_y e_N^{k+\frac12}\Big)
\leq
c\tau^4\int_0^T\|\partial_t u\|_{H^4}^2{\rm d}t
+\frac{\tau}{10}\sum_{k=0}^{n-1}\big(\|e_N^{k+1}\|^2+\|e_N^k\|^2\big).
\end{aligned}
\end{eqnarray*}
Substituting all the above estimates into the equation \eqref{w1},
and assuming that $\tau<1$ holds,
we infer that
\begin{eqnarray}\label{w7}
\begin{aligned}
\|e_N^n\|^2\leq c(N^{-2r}+\tau^4)+\frac{2\tau}{5}\sum_{k=0}^{n-1}\|e_N^k\|^2.
\end{aligned}
\end{eqnarray}
Applying Lemma \ref{gn} for inequality \eqref{w7},
we deduce that
\begin{eqnarray*}\label{r}
\begin{aligned}
\|e_N^n\|^2\leq c(N^{-2r}+\tau^4).
\end{aligned}
\end{eqnarray*}
Finally, using the triangular inequality
$\|e^n\|\leq \|e_N^n\|+\|\eta_N^n\|$ and Lemma \ref{touying},
we get the desired result.
\end{proof}

\section{Corrections}
In Section $3$, the second-order  weighted shifted GL formula is applied to discretize
the fractional operators. This formula is not a global second-order method, which
preserves second-order accuracy when $t$ is far from the
origin, but may have very low accuracy near the origin, even for smooth solutions, see  \cite{ZengZK17}.
In order to obtain highly accurate numerical solutions, the correction terms are added when
the fractional operators are discrerized, i.e.,
\begin{equation}\label{daoshu-2}
{_{RL}}D_{0,t}^\beta u^{k+1}\approx
\tau^{-\beta}\sum_{j=0}^{k+1}\lambda_j^{(\beta)}u^{k+1-j}+
\tau^{-\beta}\sum_{j=1}^{m}w_{k,j}^{(\beta)}(u^{j}-u^0)
\triangleq D_\tau^{\beta, k+1,m} u,
\end{equation}
where $\lambda_j^{(\beta)}$ are defined by \eqref{daoshu1} and
$w_{k,j}^{(\beta)}$ are the starting weights that are chosen such that
\begin{equation}\label{starting-weights}
{_{RL}}D_{0,t}^\beta u^{k+1}=
\tau^{-\beta}\sum_{j=0}^{k+1}\lambda_j^{(\beta)}u^{k+1-j}+
\tau^{-\beta}\sum_{j=1}^{m}w_{k,j}^{(\beta)}(u^{j}-u^0)
\end{equation}
for $u=t^{\sigma_j}(1\leq j \leq m)$ and $0<\sigma_j<\sigma_{j+1}$.
In the original scheme, the first-order time derivative is discretized by
$\frac{1}{2}(\partial_tu(t_k)+\partial_tu(t_{k+1}))\approx\delta_t u^{k+\frac12}$,
which yields second-order accuracy when $u(t)$ is smooth.
We modify the central difference for non-smooth solution as follows
\begin{equation}\label{eq:sec5-2}
\delta^{(m)}_t u^{k+\frac12}=\delta_t u^{k+\frac12}
+\frac{1}{\tau}\sum_{j=1}^{m}w_{k,j}(u^k-u^0),
\end{equation}
where the starting weights $w_{k,j}$ are chosen such that
\begin{equation}\label{starting-weights-2}
\frac{1}{2}\big(\partial_tu(t_k)+\partial_tu(t_{k+1})\big)
=\frac{1}{\tau}(u^{k+1}-u^k)+\frac{1}{\tau}\sum_{j=1}^{m}w_{k,j}(u^k-u^0)
\end{equation}
for $u=t^{\sigma_j}(1\leq j \leq m)$ and $0<\sigma_j<\sigma_{j+1}$.

In order to derive the ADI scheme, the perturbation term
$\frac{q^2}{p^2}(\partial_x\partial_y \delta_t u_N^{k+\frac12},\partial_x\partial_y v)$
is added to the non-ADI scheme \eqref{f1} to obtain the ADI scheme \eqref{f3}.
If $u$ is non-smooth with low regularity, then the perturbation term
$\frac{q^2}{p^2}(\partial_x\partial_y \delta_t u_N^{k+\frac12},\partial_x\partial_y v)$  may cause
large errors, which is resolved by the following modified perturbation
\begin{equation}\label{eq:sec5-3}
\frac{q^2}{p^2}\big(\partial_x\partial_y \delta_t u_N^{k+\frac12},\partial_x\partial_y v\big)
+\frac{q^2}{p^2}\frac{1}{\tau}\sum_{j=1}^mW_{k,j}\big(\partial_x\partial_y(u_N^{j}-u_N^0),\partial_x\partial_y v\big),
\end{equation}
where the starting weights are chosen such that
\begin{equation}\label{starting-weights-3}
u^{k+1}-u^k+\sum_{j=1}^mW_{k,j}(u^{j}-u^0)=0
\end{equation}
for $u=t^{\sigma_j}(1\leq j \leq m)$ and $0<\sigma_j<\sigma_{j+1}$.

Combining \eqref{daoshu-2}, \eqref{eq:sec5-2}, and \eqref{eq:sec5-3}, we obtain the modified
time discretization of \eqref{yy1} as follows:
\begin{equation}\label{eq:sec5-4}
\begin{aligned}
&\delta^{(m)}_t u^{k+\frac12}
+\sum_{i=1}^Qa_i\, D_\tau^{\beta_i, k+\frac12,m} u
+\frac{q^2}{ p^2}\partial^2_x\partial^2_y
\left(\delta_t u^{k+\frac12}+\frac{1}{\tau}\sum_{j=1}^mW_{k,j}(u^{j}-u^0)\right)\\
=&-\mu\,D_\tau^{-\beta, k+\frac12,m}\Delta u+g^{k+\frac12}+O(\tau^2),
\end{aligned}
\end{equation}
where $\delta^{(m)}_t$ is defined by  \eqref{eq:sec5-2}, and
\begin{equation}\label{eq:sec5-5}
\begin{aligned}
D_\tau^{\beta, k+\frac12,m} u=&\frac12\left(D_\tau^{\beta, k+1} u  +  D_\tau^{\beta, k} u\right)
+ \frac12\tau^{-\beta}\sum_{j=1}^{m}\left(w_{k+1,j}^{(\beta_i)}+w_{k,j}^{(\beta)}\right)(u^{j}-u^0).
\end{aligned}
\end{equation}

From \eqref{eq:sec5-4}, we derive the following improved ADI spectral method:
find$u_N^{k+1}\in V_N^0$ for $k\geq m$ such that
\begin{eqnarray}\label{ADI-corrections}
\begin{aligned}
&\Big(\delta^{(m)}_t u_N^{k+\frac12}, v\Big)
+\sum_{i=1}^Qa_i\, \Big(D_\tau^{\beta_i, k+\frac12,m} u_N, v\Big)\\
&+\frac{q^2}{ p^2}\Big(\partial_x\partial_y \delta_t u_N^{k+\frac12}, \partial_x\partial_y v\Big)
+\frac{q^2}{ p^2}\frac{1}{\tau}\sum_{j=1}^mW_{k,j}
\left(\partial_x\partial_y(u^{j}-u^0),\partial_x\partial_y v\right)\\
=&-\mu\,\Big(D_\tau^{-\beta, k+\frac12,m}\nabla u_N, \nabla v\Big)
+\Big(g^{k+\frac12}, v\Big).
\end{aligned}
\end{eqnarray}

\begin{remark}
The starting values of
$u_N^j (1\leq j \leq m)$ need to be known to start new ADI method \eqref{ADI-corrections}. This could be done
by applying the original ADI method \eqref{f3} or non-ADI method \eqref{f1} with smaller time stepsize.
Other high-order time-stepping methods can be also applied here to obtain the starting values.
In this work, we apply the   original ADI method \eqref{f3} with smaller time stepsize to derive
these values.
\end{remark}

\begin{remark}
For the modified fully discrete scheme \eqref{ADI-corrections},
the correction terms do not affect the stability, because we just need
to move the correction terms to the right side of the equation \eqref{ADI-corrections}, and these terms are bounded. However,
this modified scheme \eqref{ADI-corrections} has higher accuracy in time than the original scheme \eqref{f3} for non-smooth solutions, which will be verified in the
following numerical simulations. For more detail, we refer readers to \cite{ZengZK17} and the references therein.
\end{remark}
\section{Numerical experiment}
In this section, we carry out numerical experiments by
using the ADI spectral scheme
to illustrate our theoretical results.

\subsection{Numerical implementation}
Let $L_n(x)$ denote Legendre polynomials of degree $n.$
We select the basis functions as follows:
$$
\varphi_i(x)=L_i(x)-L_{i+2}(x),\ \ i= 0, 1,\cdots, N-2,
$$
$$
\psi_j(y)=L_j(y)-L_{j+2}(y),\ \ j= 0, 1,\cdots, N-2.
$$
It is easy to verify that the basis functions satisfy the zero boundary conditions
and we can write
$V_N^0=span\big\{\varphi_i(x)\psi_j(y),\ i,j=0,1,\cdots, N-2\big\}$.
Then we have
$$
u_N^{k+1}=\sum_{i=0}^{N-2}\sum_{j=0}^{N-2}\hat{u}_{ij}^{k+1}\varphi_i(x) \psi_j(y),
$$
where
$\{\hat{u}_{ij}^{k+1}\}_{i,j=0}^{N-2}$
are the frequency coefficients.

We rewrite the equation \eqref{f3} as follows:
\begin{eqnarray*}\label{f4}
\begin{aligned}
p^2\big(u_N^{k+1}, v\big)
+&q\big(\nabla u_N^{k+1}, \nabla v\big)
+\frac{q^2}{p^2}\Big(\partial_x\partial_y u_N^{k+1}, \partial_x\partial_y v\Big)
=\big(u_N^k, v\big)+\tau\big(g^{k+\frac12}, v\big)\\
&+
\frac{q^2}{p^2}\Big(\partial_x\partial_y u_N^k, \partial_x\partial_y v\Big)
-\sum_{i=1}^Q a_i\tau^{1-\beta_i} \bigg(\sum_{j=0}^{k}\big(\lambda_{j+1}^{(\beta_i)}+\lambda_{j}^{(\beta_i)}\big)
u_N^{k-j}, v\bigg)\\
&-\frac{\mu}{2}\tau^{1+\beta}\,\bigg(\sum_{j=0}^{k}(\lambda_{j+1}^{(\beta_i)}+\lambda_{j}^{(\beta_i)})
\nabla u_N^{k-j}, \nabla v\bigg)
\triangleq
F^k(v).\ \ \  \forall v\in V_N^0.
\end{aligned}
\end{eqnarray*}
Then for $l,s=0,1,\cdots, N-2$,
we have
\begin{eqnarray}\label{aeq}
\begin{aligned}
\sum_{i=0}^{N-2}\sum_{j=0}^{N-2}\bigg\{p^2\big(\varphi_i \psi_j, \varphi_l\psi_s\big)
+&q\big(\partial_x\varphi_i\psi_j, \partial_x\varphi_l\psi_s\big)
+q\big(\varphi_i \partial_y\psi_j,\varphi_l\partial_y \psi_s\big)\\
&+
\frac{q^2}{p^2}\Big(\partial_x\varphi_i\partial_y\psi_j, \partial_x\varphi_l\partial_y\psi_s\Big)\bigg\}
\hat{u}_{ij}^{k+1}
=F^k(\varphi_l\psi_s).
\end{aligned}
\end{eqnarray}
Moreover, we infer that
\begin{eqnarray}\label{meq}
\begin{aligned}
\bigg(p^2 M_x \otimes M_y+q\big(S_x\otimes M_y+M_x\otimes S_y\big)+\frac{q^2}{p^2}S_x\otimes S_y\bigg)U^{k+1}=F^k,
\end{aligned}
\end{eqnarray}
where
$$
M_x=\big((\varphi_i, \varphi_j)_{I_x}\big)_{i,j=0}^{N-2},\ \ \
M_y=\big((\psi_i, \psi_j)_{I_y}\big)_{i,j=0}^{N-2},
$$
$$
S_x=\big((\partial_x\varphi_i, \partial_x\varphi_j)_{I_x}\big)_{i,j=0}^{N-2},\ \
S_y=\big((\partial_y\psi_i, \partial_y\psi_j)_{I_y}\big)_{i,j=0}^{N-2},
$$
$$
U^{k+1}=\big[\hat{u}_{00}^{k+1}, \hat{u}_{01}^{k+1},\cdots,\hat{u}_{0N-2}^{k+1},
\hat{u}_{10}^{k+1},\cdots,\hat{u}_{N-2N-2}^{k+1}\big]
$$
and
$$
F^{k+1}=\big[\hat{F}_{00}^{k+1}, \hat{F}_{01}^{k+1},\cdots,\hat{F}_{0N-2}^{k+1},
\hat{F}_{10}^{k+1},\cdots,\hat{F}_{N-2N-2}^{k+1}\big].
$$
Finally, we divide the equation \eqref{meq} into the following two separate equations,
\begin{numcases}{}
\big(pM_x+\frac{q}{p}S_x\big)U^{*}=F^k,\\ \label{xd}
\big(pM_y+\frac{q}{p}S_y\big)U^{k+1}=U^{*},\label{yd}
\end{numcases}
where $U^{*}$ is an auxiliary vector.
We first solve the equation \eqref{xd} in x-direction, then we obtain $U^{k+1}$ through
the equation \eqref{yd} in y-direction.

\subsection{Numerical results}
{\bf Example 6.1.}
We consider the problem  \eqref{y1} on a more general domain $\Omega=(-2,1)\times(-1,2)$,
and the parameters in the case $Q=2$, $\alpha=1.5,\ \alpha_1=1,\ \alpha_2=0.4$,
$a_1=a_2=1,\ \mu=2$
with an exact analytical solution:
$$
u(x,y,t)=t^3\exp(-(x^2+y^2)).
$$
The corresponding forcing term is
\begin{eqnarray*}
\begin{aligned}
f(x,y,t)=\Big(\frac{\Gamma(4)}{\Gamma(2.5)}t^{1.3}+\frac{\Gamma(4)}{\Gamma(3.6)}t^{2.2}
+3t^2+8t^3(1-x^2-y^2)\Big)\exp\big(-(x^2+y^2)\big).
\end{aligned}
\end{eqnarray*}

We use the ADI spectral scheme \eqref{xd} and \eqref{yd} to solve the Example 6.1.
The convergence rates in time and space in the $L^2-$norm sense are defined
as follows,
\begin{numcases}{rate=}
\frac{\log(\|e(\tau_1,N)\|/\|e(\tau_2,N)\|)}{\log(\tau_1/\tau_2)},\nonumber\\
\frac{\log(\|e(\tau,N_1)\|/\|e(\tau,N_2)\|)}{\log(N_1/N_2)}.\nonumber
\end{numcases}{}
The convergence rates in max $L^2-$norm sense are defined as the maximum value of all the $L^2-$norms from the initial time to the final time $T$.
\renewcommand{\arraystretch}{0.7}
\begin{table}[!htb]
\begin{center}
\caption{Errors for Example 6.1, N=128,T=1.}
\begin{tabular}{ccccc}
\hline
{$1/\tau$}&$L^2$-error&rate&max $L^2$-error&rate\\
\hline \
 10&  4.9877e-3&       *&2.0888e-1&     *\\
 20&  1.2272e-3&  2.0231&5.1218e-2& 2.0280\\
 40&  3.0486e-4&  2.0091&1.2708e-2& 2.0109\\
 80&  7.6065e-5&  2.0028&3.1689e-3& 2.0037\\
160&  1.9015e-5&  2.0001&7.9191e-4& 2.0006\\
320&  4.7572e-6&  1.9990&1.9808e-4& 1.9993\\
\hline
\end{tabular}
\end{center}
\end{table}

Table 1 shows that the second-order accuracy in time is observed for both $L^2-$error
and max $L^2$-error, which is consistent with our theoretical analysis.
 We also present the pictures for the true solution and the numerical solution in Fig. 1
and Fig. 2.
\begin{figure}[h!]
\begin{minipage}[t]{0.5\linewidth}
  \centering
  \includegraphics[scale=0.5]{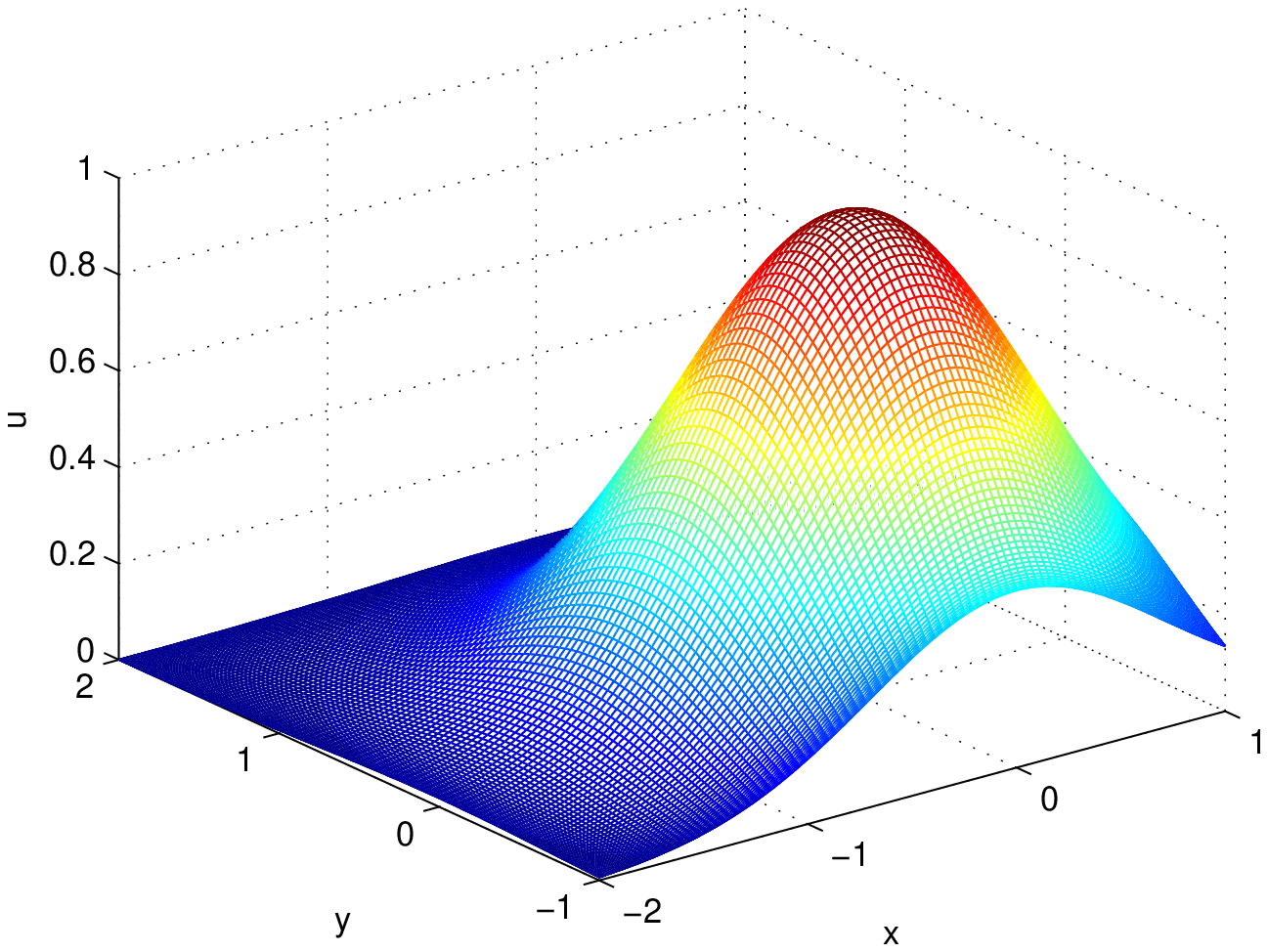}\label{fig1}
  \caption{The true solution for Example 6.1}
  \end{minipage}
\begin{minipage}[t]{0.5\linewidth}
  \centering
  \includegraphics[scale=0.5]{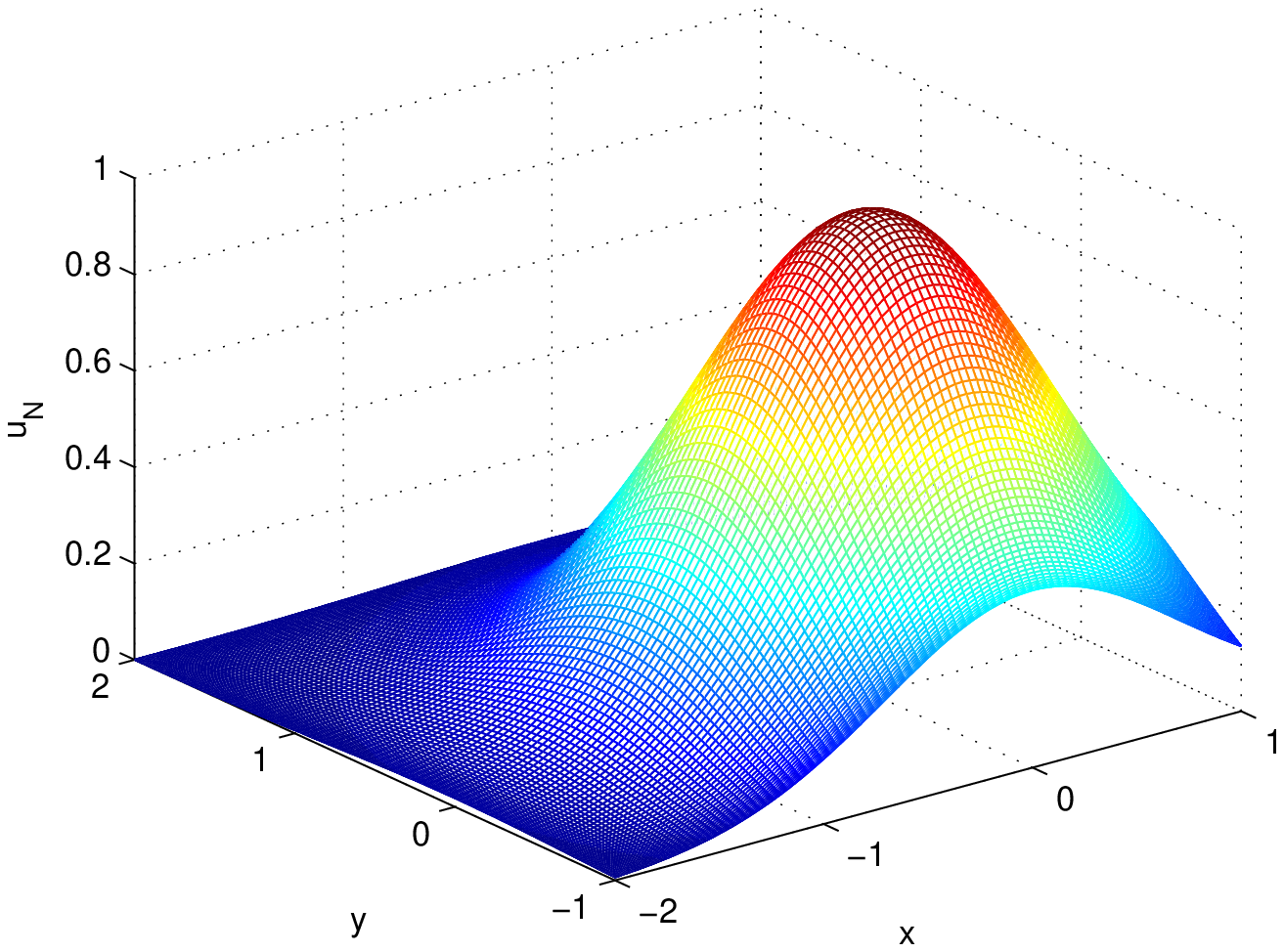}\label{fig2}
  \caption{The numerical solution for Example 6.1}
  \end{minipage}
\end{figure}

\begin{figure}[!htb]
\centering
  \includegraphics[width=0.6\textwidth]{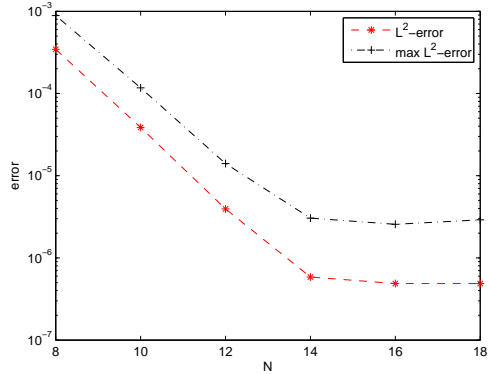}\label{fig3}
  \caption{Errors for Example 6.1 with different N}
\end{figure}
From Fig.3, we can see that both $L^2$-error and max $L^2$-error decay exponentially which is the so-called
spectral accuracy.

{\bf Example 6.2.}
We consider the problem  \eqref{y1} with non-smooth solution case, the domain is also $\Omega=(-2,1)\times(-1,2)$,
for simplicity, we consider:
 \begin{eqnarray}\label{cy1}
_CD_{0,t}^{1.1} u(x,y,t)+\partial_tu(x,y,t)+ {_C}D_{0,t}^{0.1}u(x,y,t)
=2\Delta u(x,y,t)+f(x,y,t),
\end{eqnarray}
where
\begin{eqnarray*}
\begin{aligned}
f(x,y,t)=&\bigg\{\sum_{k=1}^6\frac{\Gamma(2+0.1k)}{(k+1)\Gamma(0.9+0.1k)}t^{0.1k-0.1}
+\sum_{k=1}^6\frac{1+0.1k}{(k+1)}t^{0.1k}\\
&+\sum_{k=1}^6\frac{\Gamma(2+0.1k)}{(k+1)\Gamma(1.9+0.1k)}t^{0.1k+0.9}
+4\Big(\sum_{k=1}^6\frac{1}{(k+1)}t^{0.1k+1}+2\Big)\bigg\}\sin x\sin y
\end{aligned}
\end{eqnarray*}
The exact analytical solution is:
$$
u(x,y,t)=\bigg(\sum_{k=1}^6\frac{1}{(k+1)}t^{0.1k+1}+2\bigg)\sin x\sin y.
$$
Now we do experiments by using the scheme \eqref{ADI-corrections},
we select $N=32$ and $T=1$, the results of $L^2-$error and $L^\infty-$error
are give in Table 2 and Table 3, respectively. We also give the pictures for
the true solution and the numerical solution in Fig. 4 and Fig. 5.

\begin{table}[!htb]
\caption{$L^2$-Errors and temporal convergence rates.}
\begin{tabular}{ccccccccc}
\hline
\multirow{2}{*}{$1/\tau$}& \multicolumn{2}{c}{$m=0$}
& \multicolumn{2}{c}{$m=1$}
& \multicolumn{2}{c}{$m=2$}
& \multicolumn{2}{c}{$m=3$}\\
\cline{2-9} \multicolumn{1}{c}{}
&\multicolumn{1}{c}{Error}& {Rate}
&\multicolumn{1}{c}{Error}& {Rate}
&\multicolumn{1}{c}{Error}& {Rate}
&\multicolumn{1}{c}{Error}& {Rate}\\
\hline
$10$&  3.0362e-3     & *     & 3.6013e-4     &*       &  7.5358e-5 &*      &  4.3595e-6&* \\
$20$&  1.0847e-3     & 1.4849& 8.8247e-5     &2.0289  &  2.9336e-5 &1.3611 &  3.1068e-6&0.4887 \\
$40$&  4.3304e-4     & 1.3248& 2.1122e-5     &2.0628  &  1.0072e-5 &1.5424 &  1.3906e-6&1.1597 \\
$80$&  1.8405e-4     & 1.2344& 5.2664e-6     &2.0039  &  3.4937e-6 &1.5274 &  5.0580e-7&1.4590 \\
$160$& 8.1101e-5     & 1.1823& 1.5664e-6     &1.7494  &  1.3450e-6 &1.3771 &  1.5602e-7&1.6968 \\
$320$& 3.6538e-5     & 1.1503& 6.3146e-7     &1.3107  &  6.0335e-7 &1.1566 &  3.5408e-8&2.1396 \\
\hline
\end{tabular}
\end{table}

 \renewcommand{\arraystretch}{0.6}
\begin{table}[!htb]
\begin{center}
\caption{max $L^2-$Errors and temporal convergence rates.}
\begin{tabular}{ccccccccc}
\hline
\multirow{2}{*}{$1/\tau$}& \multicolumn{2}{c}{$m=0$}
& \multicolumn{2}{c}{$m=1$}
& \multicolumn{2}{c}{$m=2$}
& \multicolumn{2}{c}{$m=3$}\\
\cline{2-9} \multicolumn{1}{c}{}
&\multicolumn{1}{c}{Error}& {Rate}
&\multicolumn{1}{c}{Error}& {Rate}
&\multicolumn{1}{c}{Error}& {Rate}
&\multicolumn{1}{c}{Error}& {Rate}\\
\hline
$10$&  1.3171e-1     & *     & 1.5302e-3     &*      &  2.0292e-4  &*      &  1.1530e-5&* \\
$20$&  5.6773e-2     & 1.2141& 6.7180e-4     &1.1876 &  8.2733e-5  &1.2944 &  6.9002e-6&0.7407 \\
$40$&  2.4688e-2     & 1.2014& 2.7068e-4     &1.3115 &  3.1207e-5  &1.4066 &  3.3499e-6&1.0425 \\
$80$&  1.0820e-2     & 1.1901& 1.0382e-4     &1.3825 &  1.1237e-5  &1.4737 &  1.4297e-6&1.2284 \\
$160$& 4.7746e-3     & 1.1802& 3.8582e-5     &1.4281 &  3.9215e-6  &1.5187 &  5.2656e-7&1.4411 \\
$320$& 2.1197e-3     & 1.1715& 1.4000e-5     &1.4625 &  1.3349e-6  &1.5547 &  1.6619e-7&1.6638 \\
\hline
\end{tabular}
\end{center}
\end{table}

\begin{figure}[h!]
\begin{minipage}[t]{0.5\linewidth}
  \centering
  \includegraphics[scale=0.5]{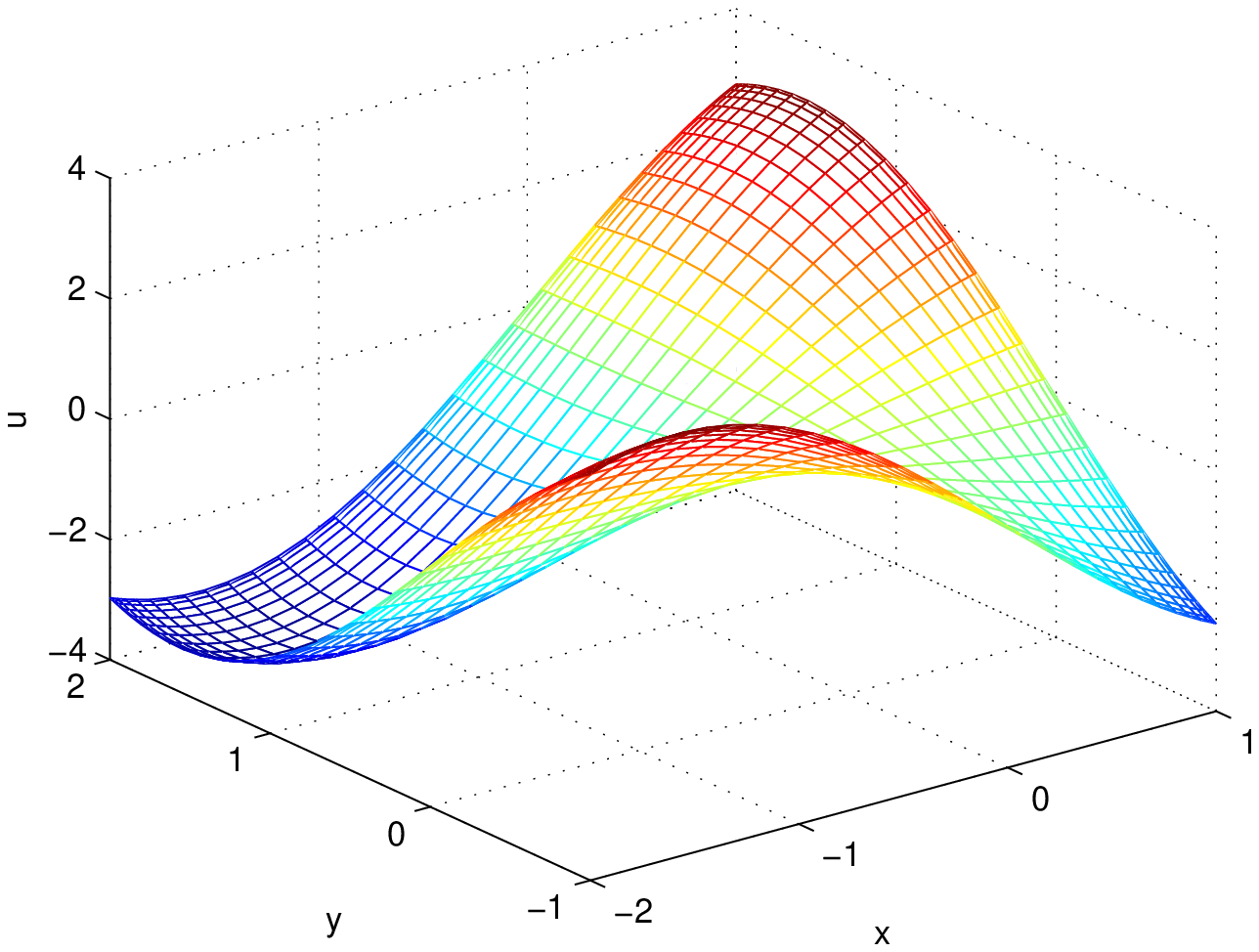}\label{fig4}
  \caption{The true solution for Example 6.2}
  \end{minipage}
\begin{minipage}[t]{0.5\linewidth}
  \centering
  \includegraphics[scale=0.5]{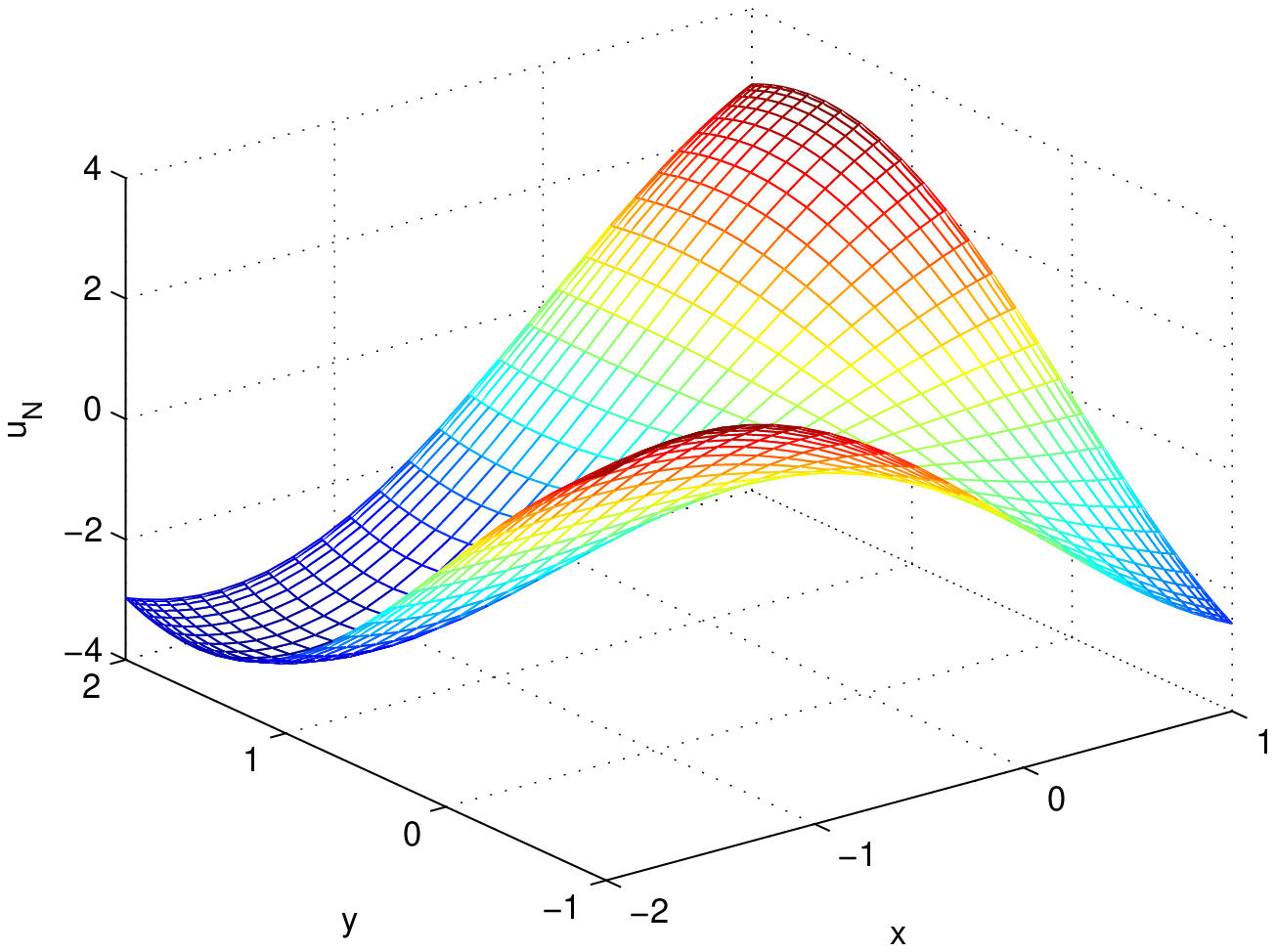}\label{fig5}
  \caption{The numerical solution for Example 6.2}
  \end{minipage}
\end{figure}

Table 2 and Table 3 show the comparison results with the two schemes \eqref{f3} and \eqref{ADI-corrections}, we can
see the results of scheme \eqref{ADI-corrections} with correction terms are better than the result of scheme \eqref{f3} with no correction term (m = 0).
\section{Conclusion}
We consider the two dimensional  multi-term time fractional mixed diffusion and
 diffusion-wave equation by Legendre spectral method in space,
we use the weighted and shifted Gr\"unwald difference operators
for the discretization of the time fractional operators.
We construct the ADI spectral scheme,  the stability and
convergence of the scheme have been rigorously established.
We also give a modified scheme to deal with the non-smooth solution case.
We present some numerical results to confirm the theoretical analysis and the correction terms we add
also verify the higher accuracy in time.
The ADI spectral scheme can be extended to solve three dimensional or higher dimensional multi-term time fractional wave equation.
In the future, we will try to solve high dimensional problems and extend to model diffusion and transport of viscoelastic non-Newtonian fluids.


\section*{Acknowledgment}

Author Liu wishes to acknowledge that this research was partially supported by the Australian Research Council (ARC) via the Discovery Project (DP180103858) and Natural Science Foundation of China (Grant No.11772046).

\end{document}